\documentclass[11pt]{article}

\usepackage[T1]{fontenc}
\usepackage{lmodern}
\usepackage{amsmath,amssymb,amsthm,mathtools}
\usepackage[margin=1in]{geometry}
\usepackage{microtype}
\usepackage{enumitem}
\usepackage{booktabs}
\usepackage[backend=biber,style=numeric,sorting=nyt,maxbibnames=20]{biblatex}
\usepackage[hidelinks]{hyperref}

\hypersetup{
  pdftitle={The Subtractive Divisor Orbit: Unconditional Bounds, Parity Constraints, and a Conditional Framework},
  pdfauthor={Marco Mantovanelli}
}

\newtheorem{theorem}{Theorem}[section]
\newtheorem{lemma}[theorem]{Lemma}
\newtheorem{proposition}[theorem]{Proposition}

\newtheorem{conjecture}[theorem]{Conjecture}
\newtheorem{hypothesis}[theorem]{Hypothesis}
\theoremstyle{definition}
\newtheorem{definition}[theorem]{Definition}
\theoremstyle{remark}
\newtheorem{remark}[theorem]{Remark}

\newcommand{\N}{\mathbb{N}}
\newcommand{\e}{\mathrm{e}}
\newcommand{\one}{\mathbf{1}}
\newcommand{\li}{\operatorname{li}}

\title{The Subtractive Divisor Orbit:\\
Unconditional Bounds, Parity Constraints, and a Conditional Framework}
\author{
Marco Mantovanelli\\
Independent Researcher\\
Germany\\
\texttt{marco@mantovanelli.de}
}
\date{}

\begin{document}
\maketitle

\begin{abstract}
Let $\tau(n)$ denote the number of positive divisors of $n$.  Starting
from an integer $x\geq 1$, consider the decreasing orbit
\[
 n_0=x,\qquad n_{j+1}=n_j-\tau(n_j),
\]
and let $a(x)$ be its hitting time of zero.  The average order of
$\tau$ suggests the order of magnitude $a(x)\asymp x/\log x$, but the
orbit is an endogenous sample of the divisor function.  We are not
aware of an unconditional estimate of this order.

We establish the exact displacement identity
$\sum_{j<a(x)}\tau(n_j)=x$ and the unconditional bounds
\[
 \frac{x}{(\log(2x))^3}\ll a(x)
 \leq \frac{3x}{8}+O\!\left(\frac{\sqrt{x}}{\log x}\right).
\]
The lower bound follows from the second moment of the divisor function.
The upper bound uses the fact that steps of size two occur only at
primes and that, apart from a finite exception, three such steps cannot
occur consecutively.  We also record a parity constraint: the orbit
changes parity exactly at square states.

On dyadic orbit segments we prove a local-to-global criterion, a
large-value truncation, and a quantitative implication from small
relative variance to a step-mass-saturating near-arithmetic progression.
We isolate the exact quantitative requirements in any Fourier
phase-rigidity argument, including the reduced modulus and the necessary
rate of phase concentration.  A conditional order-of-magnitude
reduction is then stated under two separate explicit hypotheses; neither hypothesis is
claimed here.  Finally, we provide reproducible numerical data through
$10^7$ and formulate several open problems concerning conditional
mixing, adaptive divisor ladders, and average behavior over starting
values.
\end{abstract}

\medskip
\noindent\textbf{Keywords:}
divisor function; arithmetic dynamics; deterministic recursion;
endogenous sampling; divisor moments; arithmetic progressions.

\medskip
\noindent\textbf{MSC2020:}
Primary 11A25; Secondary 11B37, 11N37, 11N60.

\section{Introduction}

For $n\in\N$, let
\[
 \tau(n)=\sum_{d\mid n}1
\]
be the divisor function.  We study the map
\[
 F(n)=n-\tau(n)
\]
and its forward orbit.  Thus, for a fixed starting value $x\in\N$,
\begin{equation}\label{eq:orbit}
 n_0(x)=x,\qquad n_{j+1}(x)=n_j(x)-\tau(n_j(x)).
\end{equation}
Since $1\leq\tau(n)\leq n$, the orbit remains non-negative, decreases
strictly while positive, and eventually reaches zero.  We write
\begin{equation}\label{eq:hitting-time}
 a(x)=\min\{j\geq 0:n_j(x)=0\}.
\end{equation}
This is sequence A155043 in the OEIS \cite{OEIS_A155043}.  To the best
of our knowledge, the unconditional bounds and the parity constraint
established below have not previously been recorded for this orbit.

The classical mean-value formula
\begin{equation}\label{eq:mean-tau}
 \sum_{n\leq X}\tau(n)=X\log X+(2\gamma-1)X+O(\sqrt X)
\end{equation}
suggests a mean step of logarithmic size and hence the order-of-magnitude
prediction
\begin{conjecture}\label{conj:main}
As $x\to\infty$ through the positive integers,
\[
 a(x)\asymp \frac{x}{\log x}.
\]
\end{conjecture}
Here and throughout, $f\asymp g$ means two-sided comparison by positive
absolute constants.  It does not mean $f/g\to 1$.

There are two immediate cautions.  First, $\log n$ describes the
\emph{average order} of $\tau(n)$; it is not its normal order.  Second,
the orbit does not sample integers independently.  Its current value of
$\tau$ determines the next sampling location.  Results for multiplicative
functions in intervals and arithmetic progressions
\cite{Shiu1980,MatomakiRadziwill2016,KlurmanMangerel2023,Mangerel2023,MatomakiRadziwillShaoTaoTeravainen2026}
therefore provide important comparison results but do not directly imply
Conjecture~\ref{conj:main}.
Likewise, correlation theorems for bounded multiplicative functions at
fixed shifts or almost all scales
\cite{MatomakiRadziwillTao2015,Tao2016,TaoTeravainen2019}
concern exogenously chosen samples rather than a path whose next state is
selected by its present divisor count.  Divisor concentration also
appears in quite different probabilistic settings
\cite{FordGreenKoukoulopoulos2023}; no direct transfer to the present
adaptive orbit is known.

\subsection{Unconditional result}

Two elementary comparison bounds are
\[
 \frac{x}{M(x)}\leq a(x)\leq\left\lceil\frac{x}{2}\right\rceil,
 \qquad
 M(x)=\max_{1\leq n\leq x}\tau(n).
\]
They follow by bounding every step from above by $M(x)$ and by using
$\tau(n)\geq2$ for $n\geq2$, respectively.  Their familiar coarse form
is $a(x)\gg_\varepsilon x^{1-\varepsilon}$ for each fixed
$\varepsilon>0$ and $a(x)\leq x/2+O(1)$.  Our main unconditional
theorem replaces both coarse estimates by explicit stronger bounds.

\begin{theorem}\label{thm:unconditional}
For every $x\geq 2$,
\begin{equation}\label{eq:unconditional}
 \frac{x^2}{\displaystyle\sum_{n\leq x}\tau(n)^2}
 \leq a(x)
 \leq \frac{3x}{8}+\frac{3}{8}\pi(\sqrt x)+\frac{11}{8}.
\end{equation}
Consequently,
\[
 \frac{x}{(\log(2x))^3}\ll a(x)
 \leq \frac{3x}{8}+O\!\left(\frac{\sqrt x}{\log x}\right).
\]
\end{theorem}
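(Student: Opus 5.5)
The lower bound will come from the displacement identity $\sum_{j<a(x)}\tau(n_j)=x$ together with Cauchy--Schwarz. The identity holds because the orbit telescopes from $x$ down to $0$. Cauchy--Schwarz then gives
\[
 x^2=\Bigl(\sum_{j<a(x)}\tau(n_j)\Bigr)^2\leq a(x)\sum_{j<a(x)}\tau(n_j)^2 .
\]
The states $n_0>n_1>\dots>n_{a(x)-1}$ are distinct integers in $[1,x]$, so the last sum is at most $\sum_{n\leq x}\tau(n)^2$. This yields the left inequality in \eqref{eq:unconditional}. The asymptotic form then follows from the classical estimate $\sum_{n\leq x}\tau(n)^2\ll x(\log 2x)^3$.

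For the upper bound I will again use $\sum_j\tau(n_j)=x$, and show that the steps average at least $8/3$ up to a controlled deficit. I classify the states by step size:
\begin{itemize}
\item $\tau(n)=1$ only for $n=1$;
\item $\tau(n)=2$ exactly for primes;
\item $\tau(n)=3$ exactly for squares of primes $p^2\leq x$, of which there are $\pi(\sqrt x)$;
\item $\tau(n)\geq4$ for every other state.
\end{itemize}
The key observation is that a prime step sends an odd prime $p$ to $p-2$. Three consecutive prime states $p,p-2,p-4$ force one of them to be divisible by $3$, hence the triple $7,5,3$. So, apart from the tail $7\to5\to3\to1\to0$, runs of consecutive prime states have length at most $2$. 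The only even prime is $2$, whose step goes $2\to0$ and ends the orbit.

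I then cut the orbit into blocks. Each block is a maximal run of $k\in\{0,1,2\}$ prime states followed by one non-prime state, plus a terminal block at the end of the orbit.
\begin{itemize}
\item If the closing state has $\tau\geq4$, the block has $k+1$ steps and total displacement at least $2k+4\geq\frac83(k+1)$. These blocks have no deficit.
\item If the closing state is a prime square, the displacement is $2k+3$. The deficit $\frac83(k+1)-(2k+3)$ is at most $1$ (attained at $k=2$), and there are at most $\pi(\sqrt x)$ such blocks.
\item The terminal block ends at $0$ via $2\to0$, $1\to0$, or the exceptional chain $7\to5\to3\to1\to0$. I will check these finitely many configurations by hand. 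The chain has $4$ steps and displacement $7$, so its deficit is $\frac{32}{3}-7=\frac{11}{3}$, and I expect this to be the worst case.
\end{itemize}
Summing over blocks gives $x\geq\frac83 a(x)-\pi(\sqrt x)-\frac{11}{3}$. This rearranges to $a(x)\leq\frac{3x}{8}+\frac38\pi(\sqrt x)+\frac{11}{8}$, and the $O(\sqrt x/\log x)$ form follows from Chebyshev's bound for $\pi(\sqrt x)$.

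The main obstacle is the bookkeeping at the end of the orbit. Terminal blocks can be incomplete: a run of primes with no closing non-prime, or tails passing through $3$, $2$, or $1$. I must verify that every such terminal configuration, including a prime-square block immediately preceding the tail (for example $9\to6$ versus $4\to1$), has deficit at most $\frac{11}{3}$ beyond the $\pi(\sqrt x)$ term. I would enumerate all orbits of small integers, say below $30$, directly to confirm that the constant $\frac{11}{8}$ suffices.
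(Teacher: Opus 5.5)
Your proposal is correct and is essentially the paper's proof. The lower bound is the same Cauchy--Schwarz argument against the displacement identity. Your per-block deficit relative to $8/3$ is a localized form of the paper's global inequalities $x\ge 2P+4R+3S+U$ and $P\le 2(R+S)+3$, built on the same step-size classification and the same mod-$3$ prime-run lemma.

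The terminal check you defer closes at once, and the paper's global count avoids it entirely:
\begin{itemize}
\item The last positive state is $1$ or $2$, since $\tau(n)=n$ forces $n\le 2$.
\item The state $2$ has only the predecessor $6$.
\item The only prime chain into $1$ is $7\to5\to3$, since $7$ has no predecessor.
\item Hence the terminal block is one of $\{2\},\{1\},\{3,1\},\{5,3,1\},\{7,5,3,1\}$, with deficits $\tfrac{2}{3},\tfrac{5}{3},\tfrac{7}{3},3,\tfrac{11}{3}$ respectively.
\end{itemize}
Blocks closed by $4$ or $9$ are already covered by your generic cases and need no special treatment.
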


The lower bound uses only the exact displacement of the orbit and the
classical estimate
\begin{equation}\label{eq:second-moment}
 \sum_{n\leq X}\tau(n)^2\ll X(\log(2X))^3.
\end{equation}
The upper bound is a small structural gain: a step of size two occurs
exactly at a prime, and long runs of such steps are prohibited modulo
three.

The same parity phenomenon gives another unconditional constraint.

\begin{proposition}\label{prop:parity}
The parity of $n_j(x)$ changes exactly when $n_j(x)$ is a square.
Consequently, the orbit starting from $x$ changes parity at most
$\lfloor\sqrt x\rfloor$ times.
\end{proposition}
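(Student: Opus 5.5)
The plan is to reduce the first claim to the classical fact that $\tau(n)$ is odd if and only if $n$ is a perfect square. Once that is in place, the parity statement about $n_{j+1}=n_j-\tau(n_j)$ is immediate. The counting bound then follows from strict monotonicity of the orbit.

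\emph{Step 1 (odd divisor count).} Pair each divisor $d$ of $n$ with its complementary divisor $n/d$. This is an involution on the set of divisors of $n$. Its only possible fixed point is $d=\sqrt n$, which exists exactly when $n$ is a square. Hence $\tau(n)$ is odd if and only if $n$ is a square. We take $n\geq 1$ here, so that all states before the hitting time are covered.

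\emph{Step 2 (parity change).} Since $n_{j+1}\equiv n_j-\tau(n_j)\pmod 2$, we have $n_{j+1}\not\equiv n_j\pmod 2$ exactly when $\tau(n_j)$ is odd. By Step 1 this happens exactly when $n_j$ is a square. This is meant for $0\le j<a(x)$, so that $n_j\geq1$. We should decide whether the final transition into $0$ counts. It does: $n_{a(x)-1}=1$ is a square, since the only $n$ with $n-\tau(n)=0$ are $n=1$ and $n=2$, and $2-\tau(2)=0$ with $2$ not a square gives no parity change, which is consistent.

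\emph{Step 3 (counting).} Every parity change occurs at an index $j<a(x)$ where $n_j$ is a square in $[1,x]$. The orbit is strictly decreasing while positive, so the states $n_0>n_1>\dots>n_{a(x)-1}$ are distinct. Each square $m^2\le x$ is therefore visited at most once. There are $\lfloor\sqrt x\rfloor$ such squares, which gives the bound.

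No step is a real obstacle. The only point needing care is the bookkeeping in Step 2, namely that the state $0$ is excluded from the statement because $\tau(0)$ is undefined. We handle this by restricting to $j<a(x)$.
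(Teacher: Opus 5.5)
Your proof is correct and is the same as the paper's: divisor pairing shows $\tau(n)$ is odd exactly when $n$ is a square, the congruence $n_{j+1}\equiv n_j-\tau(n_j)\pmod 2$ turns this into the parity rule, and strict decrease means each of the $\lfloor\sqrt x\rfloor$ squares in $[1,x]$ is visited at most once. One small slip: the last positive state $n_{a(x)-1}$ need not be $1$ (for example $x=6$ gives $6,2,0$), as your own next clause acknowledges, and the side remark is unnecessary anyway since Step~2 already covers every $j<a(x)$.
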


Thus uniform residue-class mixing, even modulo two, is not an automatic
or neutral model for this orbit.  Any future mixing statement must allow
for local arithmetic factors or condition on them explicitly.

\subsection{Structural scope}

We retain two useful parts of a structure-versus-randomness approach.
First, a sufficiently strong divisor-incidence regularity condition on a
dyadic orbit segment implies a logarithmic mean step.  Second, if the
step sizes on a segment have small relative variance, then almost all of
the displacement is carried by a dynamically compatible near-arithmetic
progression.  The latter conclusion holds with the segment mean $T$;
it does \emph{not} by itself imply $T\asymp\log N$.

We also explain precisely what a phase argument would have to prove.
For moduli growing with $N$, an $o(1)$ mean-square phase error is not
enough to force a residue class.  The error must be small relative to the
square of the reduced modulus.  Moreover, significant Fourier bias is
not the same as near-maximal bias.  These are open steps, not consequences
of the elementary identities.

\subsection{Organization}

Section~\ref{sec:exact} proves Theorem~\ref{thm:unconditional} and the
parity constraint.  Section~\ref{sec:dyadic} develops the dyadic
local-to-global criterion.  Section~\ref{sec:tails} gives the valid
large-value and single-bin reductions, with their limitations made
explicit.  Section~\ref{sec:ladders} proves the small-variance ladder
theorem.  Section~\ref{sec:phase} records quantitative phase lemmas.
Section~\ref{sec:conditional} gives a deliberately conditional two-hypothesis
reduction.  Numerical data and open problems appear in
Sections~\ref{sec:numerics} and~\ref{sec:open}.

\section{Exact dynamics and unconditional bounds}\label{sec:exact}

\subsection{Exact termination and displacement}

\begin{lemma}\label{lem:exact-energy}
For every $x\in\N$, the orbit in \eqref{eq:orbit} reaches zero and never
becomes negative.  Moreover,
\begin{equation}\label{eq:exact-energy}
 \sum_{j=0}^{a(x)-1}\tau(n_j(x))=x.
\end{equation}
More generally, if $0\leq r<s\leq a(x)$, then
\begin{equation}\label{eq:segment-energy}
 n_r(x)-n_s(x)=\sum_{j=r}^{s-1}\tau(n_j(x)).
\end{equation}
\end{lemma}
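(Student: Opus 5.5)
The plan is a direct elementary argument with three parts: well-definedness, termination, and telescoping.

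\emph{Well-definedness and non-negativity.} I would prove by induction on $j$ that, as long as $n_j(x)\geq 1$, the next term satisfies $0\leq n_{j+1}(x)<n_j(x)$. For $n\geq 1$ we have $1\leq\tau(n)\leq n$: the divisor $1$ gives the lower bound, and every divisor of $n$ lies in $\{1,\dots,n\}$, giving the upper bound. Hence
\[
 0\leq n-\tau(n)\leq n-1 .
\]
So while the orbit is positive it stays a non-negative integer and strictly decreases by at least one at each step.

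\emph{Termination.} A strictly decreasing sequence of non-negative integers starting at $x$ can take at most $x$ positive values. Therefore some index $j\leq x$ has $n_j(x)=0$, and $a(x)$ in \eqref{eq:hitting-time} is a well-defined integer with $a(x)\leq x$. Since $\tau(0)$ is not defined, I would read the recursion \eqref{eq:orbit} as stopping at the first zero; only the indices $0\leq j<a(x)$ are used afterwards. For these indices $n_j(x)\geq 1$, by minimality of $a(x)$, so the previous step applies and no term is ever negative.

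\emph{Displacement identities.} For $0\leq r<s\leq a(x)$, every index $j$ with $r\leq j\leq s-1$ has $n_j(x)\geq1$. The recursion therefore gives $n_j(x)-n_{j+1}(x)=\tau(n_j(x))$ for each such $j$. Summing over $j=r,\dots,s-1$ telescopes to \eqref{eq:segment-energy}. Taking $r=0$ and $s=a(x)$, and using $n_0(x)=x$ and $n_{a(x)}(x)=0$, yields \eqref{eq:exact-energy}.

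The case $x\geq1$ forces $a(x)\geq1$, so the range $r<s$ is nonempty. There is no real obstacle here; the only point needing care is the convention that the orbit stops at zero, which makes the sums meaningful.
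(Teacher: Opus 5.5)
Your proposal is correct and follows the paper's own argument: $1\le\tau(n)\le n$ gives $0\le F(n)<n$ and hence termination, and telescoping $n_j-n_{j+1}=\tau(n_j)$ yields both identities. Your extra remarks (the bound $a(x)\le x$, and that the recursion stops at the first zero because $\tau(0)$ is undefined) are harmless clarifications that the paper leaves implicit.
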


\begin{proof}
For $n\geq1$ one has $1\leq\tau(n)\leq n$.  Hence
$0\leq F(n)<n$, so iteration reaches zero after finitely many steps.
Summing $n_j-n_{j+1}=\tau(n_j)$ telescopes and gives both identities.
\end{proof}

It is useful to regard the left-hand side of \eqref{eq:exact-energy} as
the \emph{step mass} of the orbit.  We avoid the term additive energy,
which usually denotes a different, quadratic quantity.

\subsection{The second-moment lower bound}

\begin{proof}[Proof of the lower bound in Theorem~\ref{thm:unconditional}]
The positive values $n_0,\ldots,n_{a(x)-1}$ are distinct integers in
$[1,x]$.  By Lemma~\ref{lem:exact-energy} and Cauchy--Schwarz,
\[
 x^2
 =\left(\sum_{j<a(x)}\tau(n_j)\right)^2
 \leq a(x)\sum_{j<a(x)}\tau(n_j)^2
 \leq a(x)\sum_{n\leq x}\tau(n)^2.
\]
This proves the first inequality in \eqref{eq:unconditional}.  The
estimate \eqref{eq:second-moment}, a standard consequence of the
Dirichlet series $\sum\tau(n)^2n^{-s}=\zeta(s)^4/\zeta(2s)$, gives
$a(x)\gg x/(\log(2x))^3$; see, for example,
\cite{Apostol1976,Tenenbaum2015,JiaSankaranarayanan2014}.
\end{proof}

\subsection{A prime-run upper bound}

\begin{lemma}\label{lem:prime-runs}
Apart from the finite run $7,5,3$, a sequence of consecutive prime
states in the orbit has length at most two.
\end{lemma}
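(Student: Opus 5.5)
If $n_j=p$ is prime, then $\tau(p)=2$ and the next state is $n_{j+1}=p-2$. So a run of consecutive prime states $p, p-2, p-4,\dots$ is a run of primes in arithmetic progression with common difference $2$. The whole argument therefore reduces to the classical observation that three numbers $p,\,p-2,\,p-4$ cover every residue class modulo $3$.

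The plan is as follows. Suppose $n_j, n_{j+1}, n_{j+2}$ are all prime. Since $\tau(n_j)=\tau(n_{j+1})=2$, these three states are $p,\,p-2,\,p-4$ with $p=n_j$. Modulo $3$ we have $p-2\equiv p+1$ and $p-4\equiv p+2$, so the three numbers represent the residues $p,\,p+1,\,p+2 \pmod 3$. Exactly one of them is therefore divisible by $3$. Being prime, that element must equal $3$ itself.

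Next we enumerate the possibilities. All three numbers must be positive primes, so the smallest, $p-4$, is at least $2$. Parity then forces all three to be odd, which excludes any run containing $2$: for instance $4,2$ is not a prime run, and $2$ maps to $0$. We now place $3$ in each position in turn:
\begin{itemize}
\item $p-4=3$ gives $p=7$ and the run $7,5,3$;
\item $p-2=3$ gives $p-4=1$, which is not prime;
\item $p=3$ gives $p-4<0$.
\end{itemize}
Hence the only run of three consecutive prime states is $7,5,3$. It cannot be extended upward, since a prime state preceding $7$ would have to be $9$, which is not prime. It cannot be extended downward either, since $3\mapsto 1$ and $1$ is not prime. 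So no run of length four or more exists. Every other run of consecutive prime states thus has length at most two, which is the claim.

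There is no real obstacle here. The only points needing care are the boundary cases: the even prime $2$, and the fact that after $3$ the orbit reaches $1$ and then $0$, neither of which is prime. Both are dispatched by the parity and positivity remarks above.
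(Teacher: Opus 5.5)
Your proof is correct and follows essentially the same route as the paper: a prime state steps by exactly $2$, so three consecutive prime states are $p,p-2,p-4$; one of these is divisible by $3$ and hence equals $3$, which leaves only the run $7,5,3$. Your explicit handling of the prime $2$ and of runs of length four (which would require $9$ or $1$ to be prime) fills in boundary details that the paper leaves implicit.
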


\begin{proof}
If $p$ is prime, then $\tau(p)=2$, so the next state is $p-2$.
Three consecutive odd prime states would therefore have the form
$p,p-2,p-4$.  One of these three integers is divisible by $3$; if all
are prime, that integer must equal $3$.  This gives only the run
$7,5,3$.
\end{proof}

\begin{proof}[Proof of the upper bound in Theorem~\ref{thm:unconditional}]
Partition the positive states of the orbit into four types.  Let $P$ be
the number of prime states, $S$ the number of states of the form $p^2$
with $p$ prime, $R$ the number of remaining states at least two, and
$U\in\{0,1\}$ indicate whether the state $1$ is visited.  Then
\[
 a(x)=P+R+S+U
\]
and, by the exact displacement identity,
\begin{equation}\label{eq:mass-types}
 x\geq 2P+4R+3S+U.
\end{equation}
Indeed, $\tau(n)=2$ exactly at primes, $\tau(n)=3$ exactly at squares
of primes, and every other $n\geq2$ has at least four divisors.

Prime states occur in runs separated by the $R+S$ composite states;
the possible terminal state $1$ cannot separate two prime runs.
Lemma~\ref{lem:prime-runs} therefore gives
\[
 P\leq 2(R+S)+3.
\]
Combining this with \eqref{eq:mass-types} gives
\begin{align*}
 3x-8a(x)
 &\geq -2P+S+4R-5U\\
 &\geq -3S-6-5U\\
 &\geq -3S-11.
\end{align*}
Therefore
\[
 a(x)\leq\frac{3x}{8}+\frac{3S}{8}+\frac{11}{8}.
\]
The orbit visits each integer at most once, so $S\leq\pi(\sqrt x)$.
This proves the exact upper bound.  The classical estimate
$\pi(y)\ll y/\log y$ gives its displayed
$O(\sqrt x/\log x)$ consequence.
\end{proof}

\subsection{Squares as parity gates}

\begin{proof}[Proof of Proposition~\ref{prop:parity}]
The classical pairing of divisors shows that $\tau(n)$ is odd if and
only if $n$ is a square.  Since
$n_{j+1}\equiv n_j-\tau(n_j)\pmod2$, the parity changes precisely at
square states.  There are at most $\lfloor\sqrt x\rfloor$ squares in
$[1,x]$, and the orbit is strictly decreasing.
\end{proof}

For later reference, the same observation has a Fourier formulation.
For any finite set $J$ of consecutive orbit indices,
\begin{equation}\label{eq:parity-phase}
 \frac1{|J|}\sum_{j\in J}
 \left|\e^{-\pi i\tau(n_j)}-1\right|^2
 =4\frac{\#\{j\in J:n_j\text{ is a square}\}}{|J|}.
\end{equation}
This deterministic identity is one reason that unconditioned residue
mixing is too crude a conjectural model.

\section{Dyadic orbit segments}\label{sec:dyadic}

\subsection{Hitting times and boundary errors}

For a fixed starting value $x$ and a real number $y\in[0,x]$, define
\[
 t_x(y)=\min\{j\geq0:n_j(x)\leq y\}.
\]
For $N>0$ with $2N\leq x$, the orbit block crossing $(N,2N]$ is
\[
 J_x(N)=\{t_x(2N),\ldots,t_x(N)-1\}.
\]
Write
\[
 V_x(N)=|J_x(N)|,
 \qquad
 E_x(N)=\sum_{j\in J_x(N)}\tau(n_j).
\]
Whenever $V_x(N)>0$, write in addition
\[
 T_x(N)=\frac{E_x(N)}{V_x(N)}
\]
for the mean step on the block.

\begin{lemma}\label{lem:hitting-boundary}
Uniformly for $1\leq y<x$,
\[
 n_{t_x(y)}=y+O(\sqrt y+1).
\]
Consequently, if $2N\leq x$, then
\begin{equation}\label{eq:dyadic-mass}
 E_x(N)=N+O(\sqrt N+1).
\end{equation}
\end{lemma}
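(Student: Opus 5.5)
Since $n_{t_x(y)}\le y$ by definition, only the lower bound needs work. The idea is that the orbit is strictly decreasing and overshoots $y$ by at most one step. The index $t=t_x(y)$ is at least $1$, because $n_0=x>y$. Minimality of $t$ gives $n_{t-1}>y$, so
\[
 y\geq n_t=n_{t-1}-\tau(n_{t-1})>y-\tau(n_{t-1}).
\]
It therefore suffices to bound $\tau(n_{t-1})$ by $O(\sqrt y+1)$. This is not automatic, because $n_{t-1}$ can be as large as $x$.

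The key observation is that $n_{t-1}-\tau(n_{t-1})=n_t\geq0$, so $\tau(n_{t-1})\le n_{t-1}$ is the only a priori constraint. We combine this with the elementary bound $\tau(m)\le 2\sqrt m$, which comes from pairing each divisor $d$ with $m/d$.

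Write $m=n_{t-1}$, so that $m>y\ge m-\tau(m)\ge m-2\sqrt m$. Then $\sqrt m$ satisfies $(\sqrt m-1)^2\le y+1$, i.e. $\sqrt m\le 1+\sqrt{y+1}$. This gives $\tau(m)\le 2\sqrt m\le 2+2\sqrt{y+1}=O(\sqrt y+1)$, uniformly in $x$. Hence
\[
 y-O(\sqrt y+1)\le n_{t_x(y)}\le y,
\]
which proves the first claim. The bound in terms of $y$ rather than $m$ is the only mild subtlety, and it is where the step $m-2\sqrt m\le y$ is used.

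For \eqref{eq:dyadic-mass}, apply Lemma~\ref{lem:exact-energy}, specifically \eqref{eq:segment-energy}, with $r=t_x(2N)$ and $s=t_x(N)$. We need $r<s\le a(x)$ in order to use it. If $J_x(N)$ is empty the identity is trivial up to the error term, but in fact $n_r\le 2N$ and $n_r=2N+O(\sqrt N+1)>N$ for $N$ large, so $s>r$; small $N$ are absorbed into the $O(1)$.

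The two cases $y=2N<x$ and $y=N\ge1$ are both admissible for the first part. The degenerate case $2N=x$ gives $t_x(2N)=0$ and $n_0=2N$ exactly. This yields
\[
 E_x(N)=n_{t_x(2N)}-n_{t_x(N)}=\bigl(2N+O(\sqrt N+1)\bigr)-\bigl(N+O(\sqrt N+1)\bigr)=N+O(\sqrt N+1).
\]
Cases with $N<1$ are covered by the $O(1)$ term, since then $E_x(N)\le 2N+O(1)$ trivially.
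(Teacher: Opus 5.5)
Your proof is correct and follows the paper's argument. Both set $m=n_{t-1}>y$, use $0<m-y\le\tau(m)\le 2\sqrt m$ to control $m$, and hence the overshoot, in terms of $y$, and then telescope from $t_x(2N)$ to $t_x(N)$. Your completing-the-square step $\sqrt m\le 1+\sqrt{y+1}$ makes explicit the paper's ``$m\le 2y+O(1)$ for large $m$,'' and your separate treatment of the cases $2N=x$ and $N<1$ fills in edge cases that the paper leaves implicit.
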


\begin{proof}
Let $t=t_x(y)$.  If $t=0$, the assertion is immediate.  Otherwise put
$m=n_{t-1}>y$.  Since $n_t=m-\tau(m)\leq y$,
\[
 0<m-y\leq\tau(m)\leq2\sqrt m.
\]
For $m$ larger than an absolute constant this inequality implies
$m\leq2y+O(1)$, and hence $m-y=O(\sqrt y+1)$.  The same bound holds for
$y-n_t$.  Formula \eqref{eq:dyadic-mass} now follows by telescoping from
$t_x(2N)$ to $t_x(N)$.
\end{proof}

\subsection{A local-to-global criterion}

The natural dyadic scales for the orbit starting at $x$ are
\[
 N_k=\frac{x}{2^{k+1}}\qquad(k\geq0).
\]
These blocks partition the trajectory, up to their shared boundary
crossings and a final bounded range.

\begin{theorem}[Local-to-global criterion]\label{thm:local-global}
Suppose there are constants $0<c<C<\infty$ such that, for every
sufficiently large $x$ and every $k$ satisfying $N_k\geq\sqrt x$,
\begin{equation}\label{eq:local-mean-condition}
 c\log N_k\leq T_x(N_k)\leq C\log N_k.
\end{equation}
Then
\[
 a(x)\asymp\frac{x}{\log x}.
\]
\end{theorem}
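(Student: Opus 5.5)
The plan is to split the trajectory into the dyadic blocks $J_x(N_k)$ with $N_k\geq\sqrt x$, plus a tail made up of the initial crossing from $x$ down to $2N_0=x$ (which is empty) and the final segment below roughly $\sqrt x$. On each block, $V_x(N_k)=E_x(N_k)/T_x(N_k)$. Lemma~\ref{lem:hitting-boundary} gives $E_x(N_k)=N_k+O(\sqrt{N_k})$, and this is $\asymp N_k$ because $N_k\geq\sqrt x$ is large. The hypothesis \eqref{eq:local-mean-condition} then gives
\[
 V_x(N_k)\asymp \frac{N_k}{\log N_k}.
\]
Let $K$ be the largest $k$ with $N_k\geq\sqrt x$. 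By construction the blocks $J_x(N_0),\dots,J_x(N_K)$ are consecutive and disjoint: they run from $t_x(x)=0$ to $t_x(N_K)$, and $N_K\in[\sqrt x,2\sqrt x)$. So
\[
 a(x)=\sum_{k\leq K}V_x(N_k)+\bigl(a(x)-t_x(N_K)\bigr).
\]

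\emph{Lower bound.} Keep only the $k=0$ block. Since $N_0=x/2$, we get $a(x)\geq V_x(N_0)\gg x/\log x$.

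\emph{Upper bound.} For $k\leq K$ we have $\log N_k\geq\frac12\log x$, so
\[
 \sum_{k\leq K}V_x(N_k)\ll\sum_k\frac{N_k}{\log x}\ll\frac{x}{\log x},
\]
since $\sum_k N_k\leq x$. The tail is handled trivially. The orbit after time $t_x(N_K)$ consists of distinct integers in $[1,N_K+O(x^{1/4})]$, so it has length $O(\sqrt x)=o(x/\log x)$. Alternatively, one can use $a(n)\leq n/2+1$ from the elementary bound applied to the state $n_{t_x(N_K)}$.

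The main thing to check is the bookkeeping at the block boundaries. We need the blocks to be genuinely disjoint and to exhaust the indices up to $t_x(N_K)$. This holds because $t_x(\cdot)$ is monotone and $2N_{k+1}=N_k$, so $J_x(N_{k+1})$ begins exactly where $J_x(N_k)$ ends. We also need each block to be nonempty, so that $T_x$ is defined. This follows because $E_x(N_k)\geq N_k-O(\sqrt{N_k})>0$ for large $x$. Finally, we need the $O(\sqrt{N_k})$ error in Lemma~\ref{lem:hitting-boundary} to be absorbed uniformly, which holds since $\sqrt{N_k}\leq N_k\cdot x^{-1/4}$. The implied constants then depend only on $c$ and $C$, and the statement holds for all sufficiently large $x$. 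Small $x$ is covered trivially by adjusting the constants.
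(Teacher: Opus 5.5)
Your proposal is correct and follows essentially the same route as the paper: Lemma~\ref{lem:hitting-boundary} gives $E_x(N_k)\asymp N_k$, which turns the hypothesis into $V_x(N_k)\asymp N_k/\log N_k$; the top block gives the lower bound, $\log N_k\geq\frac12\log x$ gives the upper bound on the summed blocks, and the final tail costs only $O(\sqrt x)$ steps. Your explicit boundary bookkeeping (consecutive blocks via $2N_{k+1}=N_k$, non-empty blocks, uniform absorption of the $O(\sqrt{N_k})$ error) spells out what the paper leaves implicit, and one small simplification is available: $n_{t_x(N_K)}\leq N_K$ holds by the definition of $t_x$, so the $O(x^{1/4})$ enlargement in your tail estimate is unnecessary.
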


\begin{proof}
For $N_k\geq\sqrt x$, Lemma~\ref{lem:hitting-boundary} gives
$E_x(N_k)\asymp N_k$.  Hence \eqref{eq:local-mean-condition} implies
\[
 V_x(N_k)\asymp\frac{N_k}{\log N_k}.
\]
The top block alone gives the lower bound $a(x)\gg x/\log x$.
For the upper bound, $\log N_k\geq\tfrac12\log x$ throughout the stated
range, so
\[
 \sum_{N_k\geq\sqrt x}V_x(N_k)
 \ll\frac1{\log x}\sum_{k\geq0}N_k
 \ll\frac{x}{\log x}.
\]
After the orbit first falls below $O(\sqrt x)$, at most $O(\sqrt x)$
further steps remain.  This is $o(x/\log x)$.
\end{proof}

The theorem is a sufficient local criterion.  No converse for each
individual dyadic block is asserted.

\subsection{A strong sufficient regularity condition}

For $d\geq1$, set
\[
 A_{x,N}(d)=\#\{j\in J_x(N):d\mid n_j(x)\}.
\]

\begin{definition}[Divisor-incidence regularity]\label{def:incidence}
The block $J_x(N)$ is divisor-incidence regular if
\begin{equation}\label{eq:incidence}
 \sum_{d\leq\sqrt{2N}}
 \left|A_{x,N}(d)-\frac{V_x(N)}d\right|
 =o\!\left(V_x(N)\log N\right)
\end{equation}
along the family of blocks under consideration.
\end{definition}

This is a deliberately strong sufficient condition, not a conjecture
that ignores the parity constraint of Proposition~\ref{prop:parity}.

\begin{proposition}\label{prop:incidence-mean}
If a family of dyadic blocks is divisor-incidence regular, then
\[
 T_x(N)\asymp\log N
\]
along that family.
\end{proposition}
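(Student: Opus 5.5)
We use the Dirichlet hyperbola identity for $\tau$ and sum it over the block. For every $n\geq1$,
\[
 \tau(n)=2\#\{d\leq\sqrt n:d\mid n\}-\one_{n\text{ square}},
\]
so $2\#\{d\leq\sqrt n: d\mid n\}-1\leq\tau(n)\leq 2\#\{d\leq\sqrt n:d\mid n\}$. For $n_j\in(N,2N]$, the divisors $d\leq\sqrt n_j$ lie among $d\leq\sqrt{2N}$. Only those with $\sqrt N<d\leq\sqrt{2N}$ depend on the exact position of $n_j$, and they are handled separately. Summing over $j\in J_x(N)$ and exchanging the order of summation gives
\[
 E_x(N)=2\sum_{d\leq\sqrt N}A_{x,N}(d)+O\Big(\sum_{\sqrt N<d\leq\sqrt{2N}}A_{x,N}(d)\Big)+O(V_x(N)),
\]
where the $O(V_x(N))$ absorbs the square correction. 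A few states may lie exactly on the boundary of $(N,2N]$. They number $O(1)$, since each block index has state in $(N,2N]$ by the definition of $t_x$, and this boundary effect is harmless.

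Next we insert the regularity hypothesis \eqref{eq:incidence}. Writing $A_{x,N}(d)=V_x(N)/d+\big(A_{x,N}(d)-V_x(N)/d\big)$, the main term becomes
\[
 2V_x(N)\sum_{d\leq\sqrt N}\frac1d=V_x(N)\big(\log N+O(1)\big).
\]
The error terms are controlled as follows. The range $\sqrt N<d\leq\sqrt{2N}$ contributes $V_x(N)\sum_{\sqrt N<d\leq\sqrt{2N}}1/d=O(V_x(N))$ in its main part. Its deviations, together with all the deviations for $d\leq\sqrt N$, are bounded by the left side of \eqref{eq:incidence}, which is $o(V_x(N)\log N)$. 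Hence
\[
 E_x(N)=V_x(N)\log N\,(1+o(1))+O(V_x(N)).
\]
Dividing by $V_x(N)$, which is positive since $E_x(N)\asymp N>0$ by Lemma~\ref{lem:hitting-boundary}, we get $T_x(N)=(1+o(1))\log N$ once $N$ is large. This is in fact an asymptotic, stronger than the claimed $T_x(N)\asymp\log N$.

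The main obstacle is the bookkeeping in the hyperbola step rather than any analytic difficulty. The cutoff $\sqrt{n_j}$ varies with $j$ while the hypothesis uses the fixed cutoff $\sqrt{2N}$, and the square correction and the boundary states must be shown to contribute only $O(V_x(N))$. The proposition concerns families of blocks with $N\to\infty$, which is implicit in the $o(\cdot)$ notation. Small $N$ can therefore be discarded, and the lower-order terms are then dominated by $V_x(N)\log N$.
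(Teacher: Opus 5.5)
Your argument is correct and follows the same route as the paper. Both express $\tau(n_j)$ through its divisors $d\leq\sqrt{2N}$, interchange the order of summation, and insert \eqref{eq:incidence}. The difference is that you use the exact pairing identity $\tau(n)=2\#\{d\leq\sqrt n:d\mid n\}-\one_{n\text{ square}}$, whereas the paper uses the cruder sandwich $\tfrac12\tau(n)\leq\#\{d\leq\sqrt{2N}:d\mid n\}\leq\tau(n)$. That is why you get the sharper $T_x(N)=(1+o(1))\log N$ instead of only $T_x(N)\asymp\log N$. One small correction: by the definition of $t_x$, every state indexed by $J_x(N)$ lies in $(N,2N]$, so there are no boundary states at all, rather than $O(1)$ of them.
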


\begin{proof}
For $n\in(N,2N]$,
\[
 \frac12\tau(n)
 \leq\sum_{\substack{d\leq\sqrt{2N}\\d\mid n}}1
 \leq\tau(n).
\]
Summing over $j\in J_x(N)$, interchanging the order of summation, and
using \eqref{eq:incidence}, we obtain
\[
 E_x(N)
 \asymp V_x(N)\sum_{d\leq\sqrt{2N}}\frac1d
      +o(V_x(N)\log N)
 \asymp V_x(N)\log N.
\]
Dividing by $V_x(N)$ proves the claim.
\end{proof}

\section{Large values and divisor scales}\label{sec:tails}

The second moment gives a useful truncation on every orbit block.  It
does not, by itself, identify a preferred divisor scale.

\begin{proposition}[Large-value truncation]\label{prop:tail}
Let $L\geq1$ and $2N\leq x$.  Then
\begin{equation}\label{eq:tail-general}
 \sum_{\substack{j\in J_x(N)\\ \tau(n_j)>L}}\tau(n_j)
 \ll \frac{N(\log(2N))^3}{L}.
\end{equation}
In particular, for every fixed $A>3$,
\[
 \sum_{\substack{j\in J_x(N)\\
                   \tau(n_j)>(\log N)^A}}
 \tau(n_j)=o(N)
\]
as $N\to\infty$, uniformly in the starting value $x$.
\end{proposition}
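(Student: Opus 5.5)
The plan is a Rankin-type truncation: trade one factor of $\tau$ for the factor $\tau/L$ and then apply the second moment \eqref{eq:second-moment}. On the set where $\tau(n_j)>L$ we have $\tau(n_j)<\tau(n_j)^2/L$, so
\[
 \sum_{\substack{j\in J_x(N)\\ \tau(n_j)>L}}\tau(n_j)
 \leq \frac1L\sum_{j\in J_x(N)}\tau(n_j)^2 .
\]
As in the proof of the lower bound in Theorem~\ref{thm:unconditional}, the states $n_j$ with $j\in J_x(N)$ are distinct integers, because the orbit is strictly decreasing. By the definition of $t_x(2N)$ and $t_x(N)$, each such state lies in $(N,2N]$: for $j\geq t_x(2N)$ we have $n_j\leq 2N$, and for $j<t_x(N)$ we have $n_j>N$. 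The right-hand side is therefore at most $L^{-1}\sum_{n\leq 2N}\tau(n)^2\ll L^{-1}\,2N(\log(4N))^3$ by \eqref{eq:second-moment}. Since $\log(4N)\ll\log(2N)$ for $N\geq1/2$, this gives \eqref{eq:tail-general}. The implied constant is absolute and does not depend on $x$, because only the support of the states was used, never the dynamics.

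For the second statement, take $L=(\log N)^A$ with $A>3$ and $N\geq 3$, say, so that $L\geq1$. The bound becomes $N(\log 2N)^3(\log N)^{-A}=o(N)$ as $N\to\infty$. This is uniform in $x$ because the bound in the first part is.

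There is no real obstacle. The only points that need care are that $J_x(N)$ is confined to $(N,2N]$, which relies on the conventions for hitting times from Section~\ref{sec:dyadic}, and that the estimate is used only for $N$ large enough that $(\log N)^A\geq1$.
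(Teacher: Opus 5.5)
Your proof is correct and is essentially the paper's argument: the pointwise bound $\tau\one_{\tau>L}\le\tau^2/L$, distinctness of the visited states in $(N,2N]$, and the second moment \eqref{eq:second-moment}. One cosmetic slip is that $\log(4N)\ll\log(2N)$ fails as $N\to 1/2^{+}$. This is harmless: for $N<1$ the sum is empty, since the only possible state is $1$ and $\tau(1)=1\le L$, while for $N\ge1$ you have $\log(4N)\le 2\log(2N)$.
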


\begin{proof}
The orbit is strictly decreasing, so each integer is visited at most
once.  Since
$\tau(n)\one_{\tau(n)>L}\leq\tau(n)^2/L$, we have
\[
 \sum_{\substack{j\in J_x(N)\\ \tau(n_j)>L}}\tau(n_j)
 \leq\frac1L\sum_{N<n\leq2N}\tau(n)^2.
\]
Now apply \eqref{eq:second-moment} on the interval $(N,2N]$.
\end{proof}

The next proposition is the precise conclusion available from dyadic
pigeonholing.  It produces one non-negligible bin, not a bin carrying
almost all of the step mass.

\begin{proposition}[One-bin reduction]\label{prop:one-bin}
Fix $c>0$ and $A>3$.  Suppose $2N\leq x$ and
$B\subseteq J_x(N)$ satisfies
\[
 \sum_{j\in B}\tau(n_j)\geq cN.
\]
For all sufficiently large $N$, there are a dyadic number $T$ with
$1\leq T\leq(\log N)^A$ and a subset $B_T\subseteq B$ such that
\[
 T\leq\tau(n_j)<2T\quad(j\in B_T)
\]
and
\begin{equation}\label{eq:one-bin}
 T|B_T|\gg_{c,A}\frac{N}{\log\log N}.
\end{equation}
\end{proposition}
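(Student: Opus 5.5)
The plan is to truncate the large values with Proposition~\ref{prop:tail} and then pigeonhole over dyadic bins. First apply Proposition~\ref{prop:tail} with $L=(\log N)^A$. This gives
\[
 \sum_{\substack{j\in B\\ \tau(n_j)>(\log N)^A}}\tau(n_j)
 \leq\sum_{\substack{j\in J_x(N)\\ \tau(n_j)>(\log N)^A}}\tau(n_j)
 \ll N(\log(2N))^{3-A}.
\]
Since $A>3$, this is at most $cN/2$ once $N$ is sufficiently large in terms of $c$ and $A$. Hence the set
\[
 B'=\{j\in B:\tau(n_j)\leq(\log N)^A\}
\]
satisfies $\sum_{j\in B'}\tau(n_j)\geq cN/2$.

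Next partition $B'$ into dyadic bins. For $T=2^i$ with $0\leq i\leq I$, put
\[
 B_T=\{j\in B':T\leq\tau(n_j)<2T\}.
\]
Here $I=\lfloor A\log_2\log N\rfloor$, so every $T$ occurring satisfies $T\leq(\log N)^A$. Every $j\in B'$ lies in exactly one bin, because $1\leq\tau(n_j)\leq(\log N)^A$. The number of bins is
\[
 I+1\leq A\log_2\log N+1\ll_A\log\log N
\]
for large $N$. By pigeonhole, some bin satisfies
\[
 \sum_{j\in B_T}\tau(n_j)\geq\frac{cN/2}{I+1}\gg_{c,A}\frac{N}{\log\log N}.
\]
On that bin $\tau(n_j)<2T$, so $\sum_{j\in B_T}\tau(n_j)\leq2T|B_T|$. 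This gives \eqref{eq:one-bin}. The inclusion $B_T\subseteq B$ and the bin condition $T\leq\tau(n_j)<2T$ hold by construction.

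There is no real obstacle in this argument. The only points needing care are bookkeeping. One is making "sufficiently large $N$" depend on $c$ and $A$, so that the tail is at most $cN/2$ and $\log\log N\geq1$. The other is checking that the top bin's parameter $T$ stays within $(\log N)^A$, which is why $I$ is chosen as the floor. The $\log\log N$ loss comes solely from the number of bins. This is why the statement yields only one non-negligible bin and not a bin carrying almost all of the mass.
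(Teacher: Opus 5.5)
Your proposal is correct and is essentially the paper's proof written out in full: you truncate at $L=(\log N)^A$ using Proposition~\ref{prop:tail}, pigeonhole the remaining mass $\geq cN/2$ among the $O_A(\log\log N)$ dyadic bins $[2^i,2^{i+1})$, and use $\tau(n_j)<2T$ on the chosen bin to pass from its step mass to $T|B_T|$. Your extra bookkeeping fills in details the paper leaves implicit: the choice $I=\lfloor A\log_2\log N\rfloor$ keeps $T\leq(\log N)^A$ while still covering all values up to $(\log N)^A$, and you make ``sufficiently large'' depend on $c$ and $A$.
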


\begin{proof}
By Proposition~\ref{prop:tail}, values larger than $(\log N)^A$ carry
$o(N)$ step mass.  The remaining values of $\tau$ lie in
$O_A(\log\log N)$ dyadic ranges.  One such range carries
$\gg_{c,A}N/\log\log N$ mass.  On that range the mass is comparable to
$T|B_T|$.
\end{proof}

\begin{remark}[No critical scale from counting alone]\label{rem:no-critical}
If $T\leq\tau(n_j)<2T$ on a set of orbit indices, then the exact
displacement identity gives at most $O(N/T)$ such indices on a full
dyadic crossing.  Their total contribution can nevertheless be of
order $N$ for \emph{every} value of $T$.  Large steps are rarer but
heavier; small steps are lighter but may occur more often.  Thus the
counting bound does not imply $T\asymp\log N$.
\end{remark}

\section{Small variance and dynamic divisor ladders}\label{sec:ladders}

We now isolate a deterministic statement that does not require a
mixing hypothesis.  Throughout this section $J=J_x(N)$ is a non-empty
dyadic block, $V=|J|$, $E=E_x(N)$, and
\[
 T=\frac EV.
\]

\begin{definition}[Dynamic near-ladder]\label{def:dynamic-ladder}
Let $0<\eta<1$.  An ordered subsequence
\[
 m_1>m_2>\cdots>m_r
\]
of states from $J_x(N)$ is an $\eta$-\emph{dynamic near-ladder} with
parameter $T$ if
\begin{enumerate}[label=\textup{(\roman*)}]
 \item $|\tau(m_i)-T|\leq\eta T$ for every $i$;
 \item $m_i-m_{i+1}=\tau(m_i)+s_i$ with $s_i\geq0$;
 \item $\sum_{i<r}s_i\leq\eta E$.
\end{enumerate}
It is \emph{step-mass-saturating} if, in addition,
\[
 \sum_{i=1}^r\tau(m_i)\geq(1-\eta)E.
\]
\end{definition}

The slack $s_i$ is not arbitrary: it is exactly the step mass of orbit
states skipped between $m_i$ and $m_{i+1}$.  This makes the definition
dynamically compatible, unlike an arbitrary near-arithmetic
progression in $(N,2N]$.

\begin{theorem}[Small variance produces a dynamic ladder]
\label{thm:variance-ladder}
Assume $0<\varepsilon\leq1/16$ and
\begin{equation}\label{eq:variance-assumption}
 \frac1V\sum_{j\in J}(\tau(n_j)-T)^2
 \leq\varepsilon^2T^2.
\end{equation}
Then the states satisfying
\[
 |\tau(n_j)-T|\leq\sqrt\varepsilon\,T
\]
form a $3\sqrt\varepsilon$-dynamic near-ladder which is
step-mass-saturating.  Moreover, for all but
$O(\sqrt\varepsilon\,r)$ adjacent ladder pairs,
\begin{equation}\label{eq:ladder-gaps}
 m_i-m_{i+1}=T+O(\sqrt\varepsilon\,T).
\end{equation}
In particular,
\[
 r=(1+O(\varepsilon))V\asymp\frac ET.
\]
\end{theorem}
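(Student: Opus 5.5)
The plan is to apply Chebyshev's inequality to the variance hypothesis \eqref{eq:variance-assumption}, and then read off the ladder properties from the exact displacement identity \eqref{eq:segment-energy}. Let $G=\{j\in J:|\tau(n_j)-T|\leq\sqrt\varepsilon\,T\}$ be the set of good indices and $B=J\setminus G$ the bad ones. The ladder consists of the states $m_1>\cdots>m_r$ with indices in $G$, listed in orbit order.

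\textbf{Counting good and bad indices.} Chebyshev gives
\[
|B|\cdot\varepsilon T^2\leq\sum_{j\in B}(\tau(n_j)-T)^2\leq\varepsilon^2T^2V,
\]
so $|B|\leq\varepsilon V$ and $r=|G|=(1+O(\varepsilon))V$. Since $E=TV$, this also gives $r\asymp E/T$.

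\textbf{Mass carried by bad indices.} Next I bound the step mass of $B$ by Cauchy--Schwarz:
\[
\sum_{j\in B}\tau(n_j)\leq T|B|+\sum_{j\in B}|\tau(n_j)-T|\leq \varepsilon TV+\sqrt{|B|}\,\varepsilon T\sqrt V\leq 2\varepsilon E.
\]
Hence $\sum_{i}\tau(m_i)\geq(1-2\varepsilon)E\geq(1-3\sqrt\varepsilon)E$, which is the step-mass-saturating property.

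\textbf{Ladder conditions.} Condition (i) holds because $\sqrt\varepsilon\leq3\sqrt\varepsilon$. For condition (ii), let consecutive ladder states $m_i=n_a$ and $m_{i+1}=n_b$ with $a<b$. Then \eqref{eq:segment-energy} gives
\[
m_i-m_{i+1}=\tau(m_i)+\sum_{a<j<b}\tau(n_j),
\]
and every skipped index lies in $B$. So $s_i\geq0$, and the slacks sum to at most the total bad mass, $2\varepsilon E\leq3\sqrt\varepsilon E$. This gives condition (iii).

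\textbf{Gaps \eqref{eq:ladder-gaps}.} The gap $m_i-m_{i+1}$ equals $\tau(m_i)+s_i$, and $\tau(m_i)=T+O(\sqrt\varepsilon T)$. It therefore suffices to show $s_i\leq\sqrt\varepsilon T$ for all but $O(\sqrt\varepsilon r)$ pairs. By Markov, the number of pairs with $s_i>\sqrt\varepsilon T$ is at most
\[
\frac{\sum_i s_i}{\sqrt\varepsilon T}\leq\frac{2\varepsilon E}{\sqrt\varepsilon T}=2\sqrt\varepsilon V=O(\sqrt\varepsilon r).
\]

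\textbf{Expected obstacle.} The argument has no deep step. The only care needed is constant bookkeeping: checking that $\varepsilon\leq1/16$ keeps $2\varepsilon\leq3\sqrt\varepsilon$ and keeps $r$ comparable to $V$, so $r\geq(1-\varepsilon)V>0$. One should also handle the boundary effect that mass before the first good state or after the last one is not slack. This does no harm, since it only reduces $\sum s_i$, while saturation was already bounded using the total bad mass.
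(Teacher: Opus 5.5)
Your proposal is correct and follows the paper's proof essentially step for step. Both arguments bound the exceptional count $\le\varepsilon V$ by Chebyshev and bound the exceptional step mass by $2\varepsilon E$ via Cauchy--Schwarz. Both then identify each slack $s_i$ with the skipped exceptional mass and count the pairs with $s_i>\sqrt\varepsilon\,T$ by Markov.

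One bookkeeping point deserves explicit mention. The inequality $2\varepsilon\le3\sqrt\varepsilon$ holds for all $\varepsilon\le1$, so it is not where $\varepsilon\le1/16$ is needed. The hypothesis is actually used to guarantee $\eta=3\sqrt\varepsilon\le3/4<1$, which Definition~\ref{def:dynamic-ladder} requires.
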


\begin{proof}
Let
\[
 \mathcal E=\{j\in J:|\tau(n_j)-T|>\sqrt\varepsilon\,T\}.
\]
Chebyshev's inequality and \eqref{eq:variance-assumption} give
\begin{equation}\label{eq:exception-count}
 |\mathcal E|\leq\varepsilon V.
\end{equation}
The step mass of this exceptional set is bounded by
\begin{align*}
 \sum_{j\in\mathcal E}\tau(n_j)
 &\leq T|\mathcal E|
 +|\mathcal E|^{1/2}
   \left(\sum_{j\in J}(\tau(n_j)-T)^2\right)^{1/2}\\
 &\leq (\varepsilon+\varepsilon^{3/2})TV
 \leq2\varepsilon E.
\end{align*}
Thus the complementary regular set carries at least $(1-2\varepsilon)E$
step mass.  List its states in orbit order as $m_1>\cdots>m_r$.
Equation \eqref{eq:exception-count} gives
$r=(1+O(\varepsilon))V$.

Between consecutive regular states, the orbit first makes the step
$\tau(m_i)$ and then possibly passes through exceptional states.  Hence
\[
 m_i-m_{i+1}=\tau(m_i)+s_i,
\]
where $s_i\geq0$ is the intervening exceptional step mass.  Summing in
$i$ gives
\[
 \sum_{i<r}s_i\leq2\varepsilon E.
\]
Because $\varepsilon\leq1/16$, all requirements of
Definition~\ref{def:dynamic-ladder} hold with
$\eta=3\sqrt\varepsilon$.

Finally, the number of indices with
$s_i>\sqrt\varepsilon T$ is at most
\[
 \frac{2\varepsilon E}{\sqrt\varepsilon T}
 =2\sqrt\varepsilon V
 =O(\sqrt\varepsilon r).
\]
For the remaining pairs, combine this with
$\tau(m_i)=T+O(\sqrt\varepsilon T)$ to obtain
\eqref{eq:ladder-gaps}.
\end{proof}

\begin{remark}
Theorem~\ref{thm:variance-ladder} is valid for the actual mean $T=E/V$.
It makes no assertion that $T\asymp\log N$.  Establishing that scale is
part of the original orbit problem, not a consequence of the ladder
construction.
\end{remark}

\section{Quantitative phase facts}\label{sec:phase}

Write $e(t)=\e^{2\pi i t}$.  For integers $q\geq2$ and
$h\not\equiv0\pmod q$, define
\[
 z_j=e\!\left(\frac{hn_j}{q}\right),
 \qquad
 u_j=e\!\left(-\frac{h\tau(n_j)}q\right).
\]

\begin{lemma}[Phase increment identity]\label{lem:phase-identity}
For every orbit index $j$,
\[
 z_{j+1}=z_ju_j,
 \qquad
 |z_{j+1}-z_j|=|u_j-1|.
\]
\end{lemma}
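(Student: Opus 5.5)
The identity follows directly from the recursion \eqref{eq:orbit} together with the homomorphism property $e(a+b)=e(a)e(b)$ of the additive character $e$. We first use $n_{j+1}=n_j-\tau(n_j)$, valid for every index $j<a(x)$, and write
\[
 z_{j+1}=e\!\left(\frac{h n_{j+1}}{q}\right)
 =e\!\left(\frac{h n_j}{q}-\frac{h\tau(n_j)}{q}\right)
 =e\!\left(\frac{h n_j}{q}\right)e\!\left(-\frac{h\tau(n_j)}{q}\right)
 =z_ju_j .
\]
This is the first assertion. Nothing about $q$ or $h$ beyond their definitions enters here; the condition $h\not\equiv0\pmod q$ only serves to make the phases nontrivial in later uses.

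For the second assertion, we factor out $z_j$ to get
\[
 z_{j+1}-z_j=z_j(u_j-1).
\]
Since $|z_j|=1$, taking absolute values gives $|z_{j+1}-z_j|=|u_j-1|$.

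The only point requiring care is the range of the index. The statement "for every orbit index $j$" should be read as $0\le j<a(x)$, so that $n_{j+1}$ is defined by the recursion. If the orbit is formally extended past zero, then $F(0)$ would be undefined, since $\tau(0)$ is not defined. We will therefore state explicitly that the identity is used only for $j+1\le a(x)$, which covers all blocks $J_x(N)$ considered so far. There is no genuine obstacle beyond this bookkeeping: the lemma is a one-line consequence of telescoping in the exponent, in the same spirit as Lemma~\ref{lem:exact-energy}.
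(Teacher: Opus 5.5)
Your proof is correct and is the same argument as the paper's: the paper notes that the lemma is the exponential form of $n_{j+1}=n_j-\tau(n_j)$ combined with $|z_j|=1$, which is exactly your computation. Your restriction to $0\le j<a(x)$ makes explicit an index range that the paper leaves implicit.
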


\begin{proof}
This is the exponential form of
$n_{j+1}=n_j-\tau(n_j)$ and the identity $|z_j|=1$.
\end{proof}

Near-maximal bias does force small phase increments.

\begin{proposition}[Near-maximal bias]\label{prop:max-bias}
Let $0\leq\delta\leq1$, and let $J=\{r,\ldots,s\}$ be a block of
$V=s-r+1$ consecutive orbit indices.  If
\[
 \left|\frac1V\sum_{j\in J}z_j\right|\geq1-\delta,
\]
then
\[
 \frac1V\sum_{j=r}^{s-1}|u_j-1|^2\leq8\delta.
\]
\end{proposition}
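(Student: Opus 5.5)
The plan is to compare every $z_j$ with a single unit vector pointing in the direction of the mean. Then the quantity $|u_j-1|^2$ becomes, via Lemma~\ref{lem:phase-identity}, the squared distance between consecutive points on the unit circle. Each such distance is controlled by the distances of the two endpoints to that common unit vector.

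First set $w=\frac1V\sum_{j\in J}z_j$. The degenerate case $w=0$ forces $\delta\geq1$, hence $\delta=1$. It is handled trivially, because $|u_j-1|^2\leq4$ gives
\[
 \frac1V\sum_{j=r}^{s-1}|u_j-1|^2\leq\frac{4(V-1)}{V}\leq 8\delta .
\]
Otherwise put $\omega=w/|w|$, a unit complex number. Since $|z_j|=|\omega|=1$, we have the identity
\[
 |z_j-\omega|^2=2-2\,\mathrm{Re}(z_j\bar\omega).
\]
Summing over $j\in J$ gives $\sum_j\mathrm{Re}(z_j\bar\omega)=V\,\mathrm{Re}(w\bar\omega)=V|w|$. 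Therefore
\[
 \sum_{j\in J}|z_j-\omega|^2=2V(1-|w|)\leq 2V\delta .
\]
This is the only place where the bias hypothesis enters.

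Next, by Lemma~\ref{lem:phase-identity}, $|u_j-1|=|z_{j+1}-z_j|$ for $r\leq j\leq s-1$, and both $z_j$ and $z_{j+1}$ are indexed by $J$. The elementary inequality $|a+b|^2\leq2|a|^2+2|b|^2$, applied with $a=z_{j+1}-\omega$ and $b=\omega-z_j$, gives
\[
 |u_j-1|^2\leq 2|z_{j+1}-\omega|^2+2|z_j-\omega|^2 .
\]
Summing over $j=r,\ldots,s-1$, each index of $J$ appears at most twice on the right. Hence
\[
 \sum_{j=r}^{s-1}|u_j-1|^2\leq4\sum_{j\in J}|z_j-\omega|^2\leq8V\delta ,
\]
and dividing by $V$ gives the claim.

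There is no genuine obstacle here. The only points needing care are choosing $\omega$ as the normalized mean rather than an arbitrary phase, so that the real part exactly equals $|w|$, and treating the case $w=0$ separately. The constant $8$ comes out as $4\times2$, from the doubling in the triangle step and the factor $2$ in the identity for $|z_j-\omega|^2$.
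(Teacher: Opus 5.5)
Your proof is correct and takes essentially the same route as the paper: bound the mean-square distance of the $z_j$ to a fixed center by $2\delta$, then apply $|z_{j+1}-z_j|^2\le 2|z_{j+1}-c|^2+2|z_j-c|^2$ with Lemma~\ref{lem:phase-identity}, each index being counted at most twice. The one difference is that the paper takes the center to be the mean $\bar z$ itself, using $\frac1V\sum_{j\in J}|z_j-\bar z|^2=1-|\bar z|^2\le 2\delta$, which makes your separate treatment of the case $w=0$ unnecessary.
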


\begin{proof}
Let $\bar z=V^{-1}\sum_{j\in J}z_j$.  Since $|z_j|=1$,
\[
 \frac1V\sum_{j\in J}|z_j-\bar z|^2
 =1-|\bar z|^2\leq2\delta.
\]
The inequality
\[
 |z_{j+1}-z_j|^2
 \leq2|z_{j+1}-\bar z|^2+2|z_j-\bar z|^2
\]
and Lemma~\ref{lem:phase-identity} complete the proof.
\end{proof}

For growing moduli, the next quantitative form is essential.

\begin{proposition}[Phase concentration to a residue class]
\label{prop:phase-residue}
Let $Q=q/(h,q)$ and let $J$ be a non-empty finite orbit block of size
$V$.  Then
\begin{equation}\label{eq:phase-residue-quant}
 \frac1V\#\{j\in J:Q\nmid\tau(n_j)\}
 \leq\frac{Q^2}{16V}
 \sum_{j\in J}
 \left|e\!\left(-\frac{h\tau(n_j)}q\right)-1\right|^2.
\end{equation}
Consequently, along a family with $Q=Q(N)$, an exceptional proportion
$o(1)$ follows if the mean square on the right is $o(Q^{-2})$.
\end{proposition}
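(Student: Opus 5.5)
The inequality is pointwise, so the plan is to prove a lower bound for each term and then average. Fix $j$ and set $t=\tau(n_j)$. Writing $h/q=h'/Q$ with $h'=h/(h,q)$ coprime to $Q$, we get
\[
 e\!\left(-\frac{ht}{q}\right)=e\!\left(-\frac{h't}{Q}\right).
\]
This phase equals $1$ exactly when $Q\mid h't$, i.e.\ exactly when $Q\mid t$, since $(h',Q)=1$. So it suffices to show
\[
 \left|e\!\left(-\frac{h't}{Q}\right)-1\right|^2\geq\frac{16}{Q^2}
 \qquad\text{whenever } Q\nmid t.
\]
Multiplying this by $Q^2/16$ turns it into the indicator bound $\one_{Q\nmid t}\leq (Q^2/16)\,|\cdot-1|^2$. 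Summing over $j\in J$ and dividing by $V$ then gives \eqref{eq:phase-residue-quant}. The terms with $Q\mid t$ contribute nonnegatively to the right-hand side and zero to the left.

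For the pointwise bound, let $r$ be the least residue of $-h't$ modulo $Q$, so $1\leq r\leq Q-1$. Then
\[
 |e(r/Q)-1|=2\sin(\pi r/Q)\geq 2\sin(\pi/Q),
\]
because $\sin(\pi x)$ on $[1/Q,1-1/Q]$ is minimized at the endpoints. Jordan's inequality $\sin y\geq 2y/\pi$ on $[0,\pi/2]$ gives $2\sin(\pi/Q)\geq 4/Q$, valid since $Q\geq2$. Squaring yields $16/Q^2$. Note that $Q\geq2$ holds because $h\not\equiv0\pmod q$.

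The consequence follows immediately. If the mean square $V^{-1}\sum_{j\in J}|u_j-1|^2$ is $o(Q^{-2})$, then the right-hand side of \eqref{eq:phase-residue-quant} is $(Q^2/16)\cdot o(Q^{-2})=o(1)$.

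There is no real obstacle here. The only points needing care are the reduction from $q$ to the reduced modulus $Q$, which uses coprimality of $h'$ and $Q$, and the use of the sharp chord bound $2\sin(\pi/Q)\geq 4/Q$ rather than a weaker constant, so that the constant $16$ comes out exactly.
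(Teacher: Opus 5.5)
Your proof is correct and is essentially the paper's argument: the pointwise chord bound $|e(-ht/q)-1|\geq 2\sin(\pi/Q)\geq 4/Q$ whenever $Q\nmid t$, summed over the exceptional indices. You also spell out two steps the paper leaves implicit, namely the reduction to $h'/Q$ with $(h',Q)=1$ and the appeal to Jordan's inequality.
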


\begin{proof}
If $Q\nmid t$, then
\[
 \left|e\!\left(-\frac{ht}q\right)-1\right|
 \geq2\sin\frac\pi Q\geq\frac4Q.
\]
Summing this lower bound over the exceptional indices yields
\eqref{eq:phase-residue-quant}.
\end{proof}

The target phase $1$ detects the zero class modulo $Q$.  To detect an
arbitrary class $r\pmod Q$, the phase must instead be close to the
corresponding target $e(-hr/q)$.

\begin{lemma}[Correct CRT amplification]\label{lem:crt}
Let $Q_1,\ldots,Q_s$ be pairwise coprime and let
$Q=Q_1\cdots Q_s>M$.  If integers $t,t'\in[1,M]$ satisfy
\[
 t\equiv t'\pmod{Q_\ell}
 \qquad(1\leq\ell\leq s),
\]
then $t=t'$.
\end{lemma}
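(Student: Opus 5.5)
The plan is to reduce the statement to the Chinese Remainder Theorem and then use a size comparison. Set $d=t-t'$. By hypothesis, $Q_\ell\mid d$ for every $\ell=1,\ldots,s$.

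First, we upgrade these individual divisibilities to divisibility by the product. We induct on $s$. The case $s=1$ is the hypothesis itself. Suppose $Q_1\cdots Q_{s-1}\mid d$ and $Q_s\mid d$. The modulus $Q_s$ is coprime to each of $Q_1,\ldots,Q_{s-1}$, hence to their product, since a prime dividing both $Q_s$ and the product would divide some $Q_\ell$. For coprime $a,b$ with $a\mid d$ and $b\mid d$, we have $ab\mid d$: write $d=ak$; then $b\mid ak$ with $(a,b)=1$, so Euclid's lemma gives $b\mid k$. Therefore $Q=Q_1\cdots Q_s$ divides $d$.

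Second, we bound $d$. Since $t,t'\in[1,M]$, we have $|d|\leq M-1<M<Q$. A multiple of $Q$ with absolute value less than $Q$ must be zero, so $d=0$ and $t=t'$.

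There is no genuine obstacle here. The only point that needs care is that pairwise coprimality is exactly what the inductive step uses. We will also remark that the strict inequality $Q>M$ is stronger than needed, since $Q\geq M$ already suffices for values in $[1,M]$. In the paper's application, this lemma is what lets residue information for $\tau(n_j)$ modulo several coprime reduced moduli determine $\tau(n_j)$ exactly.
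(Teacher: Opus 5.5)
Your proof is correct and follows the paper's argument: the paper cites the Chinese remainder theorem to get $Q\mid(t-t')$ and then uses $|t-t'|<Q$. You additionally prove by induction and Euclid's lemma that pairwise coprime divisors of $d$ have product dividing $d$, and your side remark that $Q\geq M$ already suffices is also correct, since $|t-t'|\leq M-1$.
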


\begin{proof}
The Chinese remainder theorem gives $Q\mid(t-t')$, while
$|t-t'|<Q$.
\end{proof}

\begin{remark}[The remaining phase-rigidity gap]
Failure of divisor-incidence regularity gives a large \emph{sum} of
discrepancies.  It need not give one fixed relative discrepancy, and a
fixed relative Fourier bias need not be near-maximal.  Propositions
\ref{prop:max-bias} and~\ref{prop:phase-residue} therefore do not prove
that non-regularity produces small step variance.  In addition, when
several moduli are used, one needs the least common multiple of the
effective reduced moduli $q_\ell/(h_\ell,q_\ell)$ (their product in the
pairwise-coprime case), a common exceptional set, and a
summable total error.  These requirements are incorporated only as an
explicit hypothesis in the next section.
\end{remark}

\section{A conditional two-hypothesis framework}\label{sec:conditional}

The preceding results do not prove Conjecture~\ref{conj:main}.  This
section records a precise conditional reduction and, in particular,
keeps its two genuinely independent inputs separate.  On the non-empty
relative dyadic blocks below, define the
normalized divisor-incidence discrepancy
\[
 \mathcal D_x(N)=
 \frac{1}{V_x(N)\log N}
 \sum_{d\leq\sqrt{2N}}
 \left|A_{x,N}(d)-\frac{V_x(N)}d\right|.
\]
Only relative dyadic scales $N=N_k=x/2^{k+1}$ with
$N\geq\sqrt x$ will be used.

\begin{hypothesis}[Regularity-or-ladder dichotomy]\label{hyp:dichotomy}
There are functions
\[
 \rho,\eta:(1,\infty)\longrightarrow(0,1),
 \qquad \rho(N),\eta(N)\longrightarrow0,
\]
such that, for every sufficiently large $x$ and
every relative dyadic scale $N\geq\sqrt x$, at least one of the
following alternatives holds:
\begin{enumerate}[label=\textup{(\roman*)}]
 \item $\mathcal D_x(N)\leq\rho(N)$;
 \item $J_x(N)$ contains a step-mass-saturating
 $\eta(N)$-dynamic near-ladder with parameter $T_x(N)$.
\end{enumerate}
\end{hypothesis}

No size condition on $T_x(N)$ is included in alternative (ii).  In
particular, neither Proposition~\ref{prop:one-bin},
Theorem~\ref{thm:variance-ladder}, nor the phase lemmas supply the
logarithmic scale.

\begin{hypothesis}[Anti-ladder estimate]\label{hyp:antiladder}
With the same $\eta$ as in Hypothesis~\ref{hyp:dichotomy}, there is a
fixed $\delta\in(0,1)$ such that, for every sufficiently large $x$ and
every relative dyadic scale $N=N_k\geq\sqrt x$, every
$\eta(N)$-dynamic near-ladder with parameter $T_x(N)$
has
\[
 \sum_{i=1}^r\tau(m_i)\leq(1-\delta)E_x(N).
\]
\end{hypothesis}

Thus Hypothesis~\ref{hyp:dichotomy} says that non-regularity has a very
specific dynamically compatible consequence, at whatever mean scale
the orbit produces, whereas Hypothesis~\ref{hyp:antiladder} rules out
saturation by that consequence.  Neither is asserted unconditionally.

\begin{theorem}[Conditional order of magnitude]\label{thm:conditional}
If Hypotheses~\ref{hyp:dichotomy} and~\ref{hyp:antiladder} hold, then
\[
 a(x)\asymp\frac{x}{\log x}.
\]
\end{theorem}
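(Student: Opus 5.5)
The plan is to verify the hypothesis of the local-to-global criterion, Theorem~\ref{thm:local-global}, on every relative dyadic scale $N_k\geq\sqrt x$, and then invoke that theorem directly. So fix a sufficiently large $x$ and a scale $N=N_k\geq\sqrt x$. Since $N\geq\sqrt x\to\infty$, any quantity depending only on $N$ that tends to zero, such as $\rho(N)$ and $\eta(N)$, is uniformly small across all scales in the range. By Lemma~\ref{lem:hitting-boundary}, $E_x(N)=N+O(\sqrt N+1)>0$, so the block $J_x(N)$ is non-empty and $T_x(N)$ is defined. Apply Hypothesis~\ref{hyp:dichotomy} at this scale, and rule out alternative (ii) as follows. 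Alternative (ii) supplies a step-mass-saturating $\eta(N)$-dynamic near-ladder with parameter $T_x(N)$, that is, one with $\sum_i\tau(m_i)\geq(1-\eta(N))E_x(N)$. Hypothesis~\ref{hyp:antiladder} bounds the step mass of every such ladder by $(1-\delta)E_x(N)$. Because $E_x(N)>0$, the two bounds together give $1-\eta(N)\leq1-\delta$, i.e.\ $\eta(N)\geq\delta$. This fails once $N$ is large, since $\eta(N)\to0$ and $\delta$ is fixed. Hence, for $x$ large, alternative (i) holds on every scale in the range: $\mathcal D_x(N)\leq\rho(N)$.

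Next, convert $\mathcal D_x(N)\leq\rho(N)\to0$ into a two-sided bound $c\log N\leq T_x(N)\leq C\log N$ with absolute constants. The condition $\mathcal D_x(N)\leq\rho(N)$ is exactly the divisor-incidence regularity \eqref{eq:incidence}, so Proposition~\ref{prop:incidence-mean} applies. Because Theorem~\ref{thm:local-global} requires explicit constants, I would redo the short argument quantitatively rather than cite the $\asymp$. For $n\in(N,2N]$ we have $\frac12\tau(n)\leq\#\{d\leq\sqrt{2N}:d\mid n\}\leq\tau(n)$. Summing over the block gives
\[
 \tfrac12E_x(N)\leq\sum_{d\leq\sqrt{2N}}A_{x,N}(d)\leq E_x(N).
\]
The middle sum equals $V_x(N)\bigl(\tfrac12\log N+O(1)\bigr)+O\bigl(\rho(N)V_x(N)\log N\bigr)$, using $\sum_{d\leq\sqrt{2N}}1/d=\tfrac12\log(2N)+O(1)$. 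Dividing by $V_x(N)$ yields
\[
 \bigl(\tfrac12-o(1)\bigr)\log N\leq T_x(N)\leq(1+o(1))\log N,
\]
where the $o(1)$ terms depend only on $N$, so the bounds are uniform over the admissible scales. Taking, say, $c=\tfrac14$ and $C=2$ establishes \eqref{eq:local-mean-condition} for all sufficiently large $x$ and all $N_k\geq\sqrt x$. Theorem~\ref{thm:local-global} then gives $a(x)\asymp x/\log x$.

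No step here is deep. The point needing the most care is uniformity: both hypotheses must be applied with the same function $\eta$, and their error terms must depend on $N$ alone, not on $x$ or $k$. This is why the restriction $N\geq\sqrt x$ matters: it forces $N\to\infty$ uniformly, so the contradiction $\eta(N)\geq\delta$ and the $o(1)$ terms become effective simultaneously for all admissible $k$ once $x$ exceeds a single threshold. A minor check is that the ladder in alternative (ii) has exactly the form quantified over in Hypothesis~\ref{hyp:antiladder}: the same parameter $T_x(N)$, the same $\eta(N)$, and lying in $J_x(N)$. Both hypotheses are stated with these exact parameters.
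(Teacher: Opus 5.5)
Your proposal is correct and matches the paper's proof. Uniformly for $N\geq\sqrt x$, you use $\eta(N)\to0<\delta$ to make alternative (ii) of Hypothesis~\ref{hyp:dichotomy} incompatible with Hypothesis~\ref{hyp:antiladder}, deduce $\mathcal D_x(N)\leq\rho(N)$, turn this into $c\log N\leq T_x(N)\leq C\log N$ via the argument of Proposition~\ref{prop:incidence-mean}, and conclude with Theorem~\ref{thm:local-global}. The only difference is that you make the constants explicit, with $(\tfrac12-o(1))\log N\leq T_x(N)\leq(1+o(1))\log N$ giving $c=\tfrac14$ and $C=2$, and you check $E_x(N)>0$, where the paper just asserts that the proof of Proposition~\ref{prop:incidence-mean} yields absolute constants.
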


\begin{proof}
Take $x$ large enough that $\eta(N)<\delta$ on every relative scale
$N\geq\sqrt x$.  If alternative (ii) of
Hypothesis~\ref{hyp:dichotomy} held, its ladder would carry at least
$(1-\eta(N))E_x(N)>(1-\delta)E_x(N)$, contradicting
Hypothesis~\ref{hyp:antiladder}.  Hence
$\mathcal D_x(N)\leq\rho(N)$ on every such block.

Because $\rho(N)\to0$, these blocks satisfy
Definition~\ref{def:incidence} uniformly.  The proof of
Proposition~\ref{prop:incidence-mean} then gives absolute constants
$0<c<C<\infty$ such that
\[
 c\log N\leq T_x(N)\leq C\log N
\]
on all the relevant scales.  Theorem~\ref{thm:local-global} completes
the proof.
\end{proof}

\begin{remark}
Theorem~\ref{thm:conditional} is a reduction, not evidence that either
hypothesis is true.  A successful phase-rigidity proof would have to
produce alternative (ii) with target residue classes, effective reduced
moduli, a common exceptional set, and errors strong enough for
Proposition~\ref{prop:phase-residue}.  A separate argument would then
have to establish the anti-ladder estimate.

The value of the reduction is therefore not that the two hypotheses
are presently accessible, but that it separates the dynamical rigidity
problem from the arithmetic anti-concentration problem.
\end{remark}

\section{Reproducible computations}\label{sec:numerics}

We computed $a(n)$ for every $1\leq n\leq10^7$ with a deterministic
C++17 program.  An Euler linear sieve first computes all divisor counts
in linear time, after which the recurrence
\[
 a(n)=1+a\bigl(n-\tau(n)\bigr)
\]
is evaluated in increasing order.  The program uses no randomness or
external data.  It checks the initial OEIS values, selected divisor
counts, every displayed checkpoint, the displacement identity
\eqref{eq:exact-energy}, and the equality between square visits and
parity changes.

Table~\ref{tab:normalizations} compares three normalizations.  Here
$\li(x)=\operatorname{Ei}(\log x)$, not the offset integral beginning
at $2$.  The middle normalization is merely a phenomenological
finite-range comparison; no limiting constant is asserted.

\begin{table}[htbp]
\centering
\small
\caption{Exact checkpoint values and rounded normalization ratios.}
\label{tab:normalizations}
\begin{tabular}{@{}r r c c c@{}}
\toprule
$x$ & $a(x)$
& $\dfrac{a(x)}{x/\log x}$
& $\dfrac{a(x)}{x/(\log x+\log\log x)}$
& $\dfrac{a(x)}{\li(x)}$\\
\midrule
$10$       & $3$      & $0.6908$ & $0.9410$ & $0.4866$\\
$10^2$     & $19$     & $0.8750$ & $1.1651$ & $0.6307$\\
$10^3$     & $116$    & $0.8013$ & $1.0255$ & $0.6531$\\
$10^4$     & $962$    & $0.8860$ & $1.0996$ & $0.7720$\\
$10^5$     & $7534$   & $0.8674$ & $1.0515$ & $0.7824$\\
$10^6$     & $65059$  & $0.8988$ & $1.0697$ & $0.8274$\\
$10^7$     & $556759$ & $0.8974$ & $1.0522$ & $0.8373$\\
\bottomrule
\end{tabular}
\end{table}

The orbit statistics in Table~\ref{tab:orbit-statistics} illustrate
two cautions.  First, the population standard deviation of the steps is
not small compared with their mean at these checkpoints, so the
hypothesis of Theorem~\ref{thm:variance-ladder} is not empirically
typical here.  Second, the proportion of even source states varies
strongly with the starting point, in agreement with the square-gate
constraint of Proposition~\ref{prop:parity}.
Here an even source is a positive pre-step state $n_j$ with
$n_j\equiv0\pmod2$; the terminal state zero is not counted.

\begin{table}[htbp]
\centering
\small
\caption{Statistics along four individual checkpoint orbits.}
\label{tab:orbit-statistics}
\begin{tabular}{@{}r r r r r r@{}}
\toprule
$x$ & mean $\tau$ & s.d. $\tau$ & even sources & squares & prime states\\
\midrule
$10^4$ & $10.395$ & $7.226$  & $90.23\%$ & $4$ & $16$\\
$10^5$ & $13.273$ & $10.186$ & $99.96\%$ & $2$ & $1$\\
$10^6$ & $15.371$ & $13.187$ & $97.32\%$ & $8$ & $273$\\
$10^7$ & $17.961$ & $16.768$ & $99.68\%$ & $8$ & $288$\\
\bottomrule
\end{tabular}
\end{table}

These data are compatible with the scale $x/\log x$ over the tested
range, but they do not distinguish an asymptotic law from slowly varying
corrections and do not prove Conjecture~\ref{conj:main}.  The source
program and the CSV files underlying the displayed tables accompany the
source package.  The full data through $10^7$ can be regenerated
deterministically.

\section{Open problems}\label{sec:open}

The unconditional and conditional results suggest several concrete
next steps.

\begin{enumerate}[label=\textup{\arabic*.},leftmargin=*]
 \item \emph{Close the unconditional gap.}  Improve the exponent three
 in the lower bound $a(x)\gg x/(\log x)^3$, or prove an upper bound
 $a(x)=o(x)$.  Either would require information beyond the global
 second moment and the elementary prime-run obstruction used here.

 \item \emph{Formulate parity-compatible regularity.}  Replace
 Definition~\ref{def:incidence} by a local-factor model that conditions
 on the parity phase, or on the most recent square crossing, and still
 yields $T_x(N)\asymp\log N$.

 \item \emph{Prove a quantitative rigidity implication.}  Identify
 hypotheses under which a sum of divisor-incidence discrepancies gives
 near-maximal phase concentration at enough target residues.  The error
 must track the effective moduli $q/(h,q)$ and survive a simultaneous
 CRT amplification.

 \item \emph{Control adaptive ladders.}  Establish a non-concentration
 theorem for dynamically selected near-ladders rather than for a fixed
 arithmetic progression chosen independently of the divisor values.
 This is the missing content of Hypothesis~\ref{hyp:antiladder}.

 \item \emph{Average over starting values.}  Determine the mean and
 variance of $a(x)$ for $x\leq X$, and ask whether a typical-start
 theorem of order $x/\log x$ is more accessible than a uniform theorem.
 The full functional graph of $n\mapsto n-\tau(n)$ may retain useful
 information that is invisible on a single orbit.
\end{enumerate}

\section*{Data and code availability}

The accompanying source package contains the complete C++17 program,
its build instructions, exact normalization checkpoints, orbit parity
statistics, and the values of $a(n)$ for $1\leq n\leq10^4$.  The
program regenerates all CSV files deterministically through $10^7$.

\section*{AI disclosure}

OpenAI's ChatGPT was used for language editing, bibliographic
organization, formal cross-checking, and \LaTeX{} assistance.  The
author independently verified all mathematical statements,
computations, and bibliographic claims and assumes full responsibility
for the manuscript.

\printbibliography

\end{document}